\title[Distance graphs]{Coloring distance graphs: a few answers and many questions}
\author{Beno\^{\i}t R. Kloeckner}
\address{Universit\'e de Grenoble I, Institut Fourier\\ CNRS UMR 5582\\ BP 74\\
  38402 Saint Martin d'H\`eres cedex\\ France}
\email{benoit.kloeckner@ujf-grenoble.fr}
\theoremstyle{plain}
\newtheorem{prob}{Problem}
\newcommand{\planar}{\mathop{\mathsc{planar}}\nolimits}
\newcommand{\interval}{\mathop{\mathsc{interval}}\nolimits}
\newcommand{\compact}{\mathop{\mathsc{compact}}\nolimits}
\newcommand{\packing}{\mathop{\mathrm{Pack}}\nolimits}
\newcommand{\clique}{\mathop{\mathrm{K}}\nolimits}
\newcommand{\mathsc}[1]{{\normalfont\textsc{#1}}}
\begin{document}
%%%%%%%%%%%%%%%%%%%%%%%%%%%%%%%%%%%%%%%%%%%%%%%%%%%%%%%%%%%%%%%
%%%%%%%%%%%%%%%%%%%%%%%%%%%%%%%%%%%%%%%%%%%%%%%%%%%%%%%%%%%%%%%
\begin{abstract}
Given a metric space and a set of distances, one constructs the associated
distance graph by taking as vertices the points of the space and as edges
the pairs whose distance is in the given set.

It is a longstanding open question to determine the chromatic number of
the graph obtained from the Euclidean plane and a set reduced to one distance.

Surprisingly, while many variants of this problem have been studied, 
only a few non-Euclidean spaces seem to have been seriously considered.
In this paper, we consider the planar translation-invariant metrics
and the hyperbolic plane. We answer questions of Johnson and Szlam,
prove a few other results, and ask many questions.
\end{abstract}

\maketitle

Let $X$ be a metric space, whose metric (or distance function) shall be denoted by $\rho$,
and $D$ be a subset of $\mathbb{R}^{+*}$. Often, $D$ will be a singleton
and in some cases, one can restrict to $D=\{1\}$.

The \emph{distance graph} defined by $X$ and $D$ is defined as the graph
$G(X,D)$ whose vertices are the elements of $X$ and where $(xy)$ is an
edge exactly when $\rho(x,y)\in D$.

Let us say a word about the definition of the chromatic number,
because we will occasionally deal with the case when it is infinite, and
prefer not to dwell on cardinals and axioms.
As usual, the graph $G$ has \emph{chromatic number} $\chi(G)=k\in\mathbb{N}$
if there is a $k$-coloring of $G$, i.e. a map from the vertices
of $G$ to $\{1,\dots,k\}$ that maps adjacent vertices to different values,
and if $k$ is minimal with this property.
Now, we shall say that $\chi(G)\leq \aleph_0$ if there is a coloring with values in
$\mathbb{N}$, and $\chi(G)=\aleph_0$ if in addition there is no $k$-coloring for any finite
$k$.

The graph $G(X,D)$ is usually not locally finite,
but it turns out that in many interesting cases it has finite chromatic number.
Thanks to a theorem of De Bruijn-Erd\"os \cite{DeBruijn-Erdos},
under the axiom of choice a graph is $k$-colorable if and only if all
its finite subgraphs are $k$-colorable. Note that the chromatic number of
$G(X,\{1\})$ may depend on the chosen set of axioms in some cases,
see e.g. \cite{Soifer-Shelah}. The chromatic number of $G(X,D)$
shall be denoted by $\chi(X,D)$. Every now and then, we shall find convenient
to use $\rho$ instead of $X$ (notably, when the underlying set of $X$ is fixed but
the metric changes) in some notations.

A long-standing open question (see several chapters of \cite{Soifer:proc}) 
is to determine the chromatic number $\chi(\mathbb{E}^2,\{1\})$
where $\mathbb{E}^2$ is the Euclidean plane $\mathbb{R}^2$ with the usual distance.
We know little more than what was proved soon after the question was raised:
\[4\leq \chi(\mathbb{E}^2,\{1\})\leq 7.\]
More recently, it was proved that Lebesgue-measurable colorings of $G(\mathbb{E}^2,\{1\})$
must have at least $5$ colors \cite{soifer:book,Falconer}; in particular,
$5\leq\chi(\mathbb{E}^2,\{1\})$
under replacement axioms for the axiom of choice including
the axiom that all subsets of $\mathbb{R}$ are
Lebesgue measurable.

The higher dimensional Euclidean spaces have been well considered \cite{Soifer:proc},
as well as variations (see e.g. \cite{Soifer:Soifer} in the above mentioned reference) 
but little attention has been given to the case when $X$ is not Euclidean.
Simmons considered the round spheres \cite{Simmons};
Chilakamarri \cite{Chilakamarri} considered the Minkowski planes 
(i.e. $\mathbb{R}^2$ endowed with a norm)
and showed interesting relationship with the Euclidean case; and \cite{Johnson-Szlam} briefly
considered the more general case when $\mathbb{R}^2$ is endowed with a translation-invariant
distance that induces the usual topology. Johnson and Szlam
 asked several questions, some of which we answer
in Section \ref{sec:translation}. Our answers are pretty simple and show that the questions
in \cite{Johnson-Szlam} may be in fact too flexible; we therefore propose a variation and
give a partial answer, where prime numbers make an intriguing appearance (or rather,
disappearance). We also propose another variation on this question.

Then, we move further away from Euclideanness
by studying in Section \ref{sec:hyperbolic} the case when $X=\mathbb{H}^2$,
the hyperbolic plane. This case has been suggested by Matthew Kahle
on MathOverflow \cite{Kahle}.
 As $\mathbb{H}^2$ has no homothety, even
when $D$ is a singleton the choice of the value matters a lot;
as a consequence, the question of determining the behavior of
$\chi(\mathbb{H}^2,\{d\})$ when $d$ varies seems as rich as
the question of the relation between $\chi(\mathbb{E}^n,\{1\})$
and $n$.

%%%%%%%%%%%%%%%%%%%%%%%%%%%%%%%%%%%%%%%%%%%%%%%%%%%%%%%%%%%%%%%%%%%%
\section{Translation-invariant metric on the plane}\label{sec:translation}

In this section, we consider $X=(\mathbb{R}^2,\rho)$ where
$\rho$ is a metric (a positive, symmetric, definite function on
$(\mathbb{R}^2)^2$ that satisfies the triangular inequality). To allow $\rho$ to be completely
general would let way too much flexibility, as any metric space
with the cardinality of $\mathbb{R}$ could be considered.

%%%%%%%%%%%
\subsection{The Johnson-Szlam problem}

Following Johnson and Szlam \cite{Johnson-Szlam}, we shall therefore ask that $\rho$ is
translation invariant:
\[\rho(x,y)=\rho(x+v,y+v) \quad\forall x,y,v\in \mathbb{R}^2\]
and that $\rho$ induces the usual topology on $\mathbb{R}^2$, which
means that all Euclidean balls contain a $\rho$-ball of same center
(all balls are assumed to have positive radius) and vice-versa.
We shall call such a $\rho$ a \emph{plane metric} and
$(\mathbb{R}^2,\rho)$ a \emph{planar metric space}.

Note that by \emph{open ball} of $\rho$ we mean the sets
\[B_\rho^\circ(x,r) = \{y \,|\, \rho(x,y)< r\}\]
and by closed balls the
\[B_\rho(x,r) = \{y \,|\, \rho(x,y)\leq r\};\]
please beware that in general $B_\rho^\circ(x,r)$ needs not be the interior of $B_\rho(x,r)$.

In \cite{Johnson-Szlam}, Johnson and Szlam pose the problem of
determining the set $\planar$ of all possible values of $\chi(X,\{1\})$ when $X$ runs over
all planar metric spaces, and show that $3\in \planar$. 
In \cite{Chilakamarri} it is shown that $4\in\planar$ (realized by the supremum norm).
As subquestions, Johnson and Szlam asked whether $2\in\planar$
and whether $\planar \leq \chi(\mathbb{E}^2,\{1\})$ (by $D\leq r$ we mean that all elements
of the set $D$ are lesser than or equal to the number $r$). We shall answer all these questions
by proving, at the end of this section, the following result (we use the anglo-saxon convention that
$\mathbb{N}$ does not contain $0$).
\begin{theo}\label{theo:planar}
$\planar = \mathbb{N}\cup\{\aleph_0\}$.
\end{theo}

To this end, we use the following facts to construct ad hoc metrics.
\begin{enumerate}
\item if $\rho$ is a metric, so is $\displaystyle\frac{\rho}{1+\rho}$, 
\item if $\rho$ is a metric, so is $\min(\rho,r)$ for any positive number $r$,
\item if $\rho$ and $\rho'$ are metrics on spaces $X,X'$
      then 
      \begin{eqnarray*}
%         \rho \stackrel{p}{\times} \rho' &:=& ((x,x'),(y,y'))\mapsto (\rho(x,y)^p+\rho'(x',y')^p)^{\frac1p}
%            \quad(p\geq 1)\\
        \rho \stackrel{\infty}{\times} \rho'&:=& ((x,x'),(y,y'))\mapsto \max(\rho(x,y),\rho'(x',y'))
      \end{eqnarray*}
      is a metric on $X\times X'$,
\item if $\rho$ and $\rho'$ are metrics on $X$, then so is $\max(\rho,\rho')$.
\end{enumerate}
All of this is elementary and most is classical. The first item follows from
concavity of the chosen function, the second from a case analysis and
the last item follows from the penultimate.

Let us start small.
\begin{prop}\label{prop:12planar}
$\{1,2\}\subset\planar$
\end{prop}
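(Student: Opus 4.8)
The plan is to exhibit, for each of the values $1$ and $2$, an explicit plane metric on $\mathbb{R}^2$ whose associated unit-distance graph has exactly that chromatic number, using the stability operations (1)--(4) above to guarantee that we really do get plane metrics.

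For $1\in\planar$ I would start from the Euclidean metric $\rho_0$ and set $\rho=\min(\rho_0,1/2)$, which is a metric by item (2). For radii $r<1/2$ the ball $B_\rho^\circ(x,r)$ coincides with the Euclidean ball of the same center and radius, so $\rho$ induces the usual topology and is therefore a plane metric. Since $\rho(x,y)\le 1/2<1$ for all $x,y$, the graph $G((\mathbb{R}^2,\rho),\{1\})$ has no edge at all, and its chromatic number is $1$.

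For $2\in\planar$ I would first record that the real line with its usual metric already has a bipartite unit-distance graph: $x\mapsto\lfloor x\rfloor\bmod 2$ is a proper $2$-coloring of $G(\mathbb{R},\{1\})$, because $|x-y|=1$ forces $\lfloor x\rfloor$ and $\lfloor y\rfloor$ to have opposite parities, and there is at least one edge. To promote this to the plane, combine the usual metric $\sigma$ on the first factor $\mathbb{R}$ with the bounded metric $\tau=\min(|\cdot-\cdot|,1/2)$ on the second factor and set $\rho=\sigma\stackrel{\infty}{\times}\tau$, a metric by items (2) and (3). Again $\rho$-balls of radius $<1/2$ are open Euclidean squares, so $\rho$ is a plane metric. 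The key point is that $\rho\bigl((x_1,x_2),(y_1,y_2)\bigr)=1$ holds if and only if $|x_1-y_1|=1$, the second coordinate contributing at most $1/2$; hence the unit-distance graph of $(\mathbb{R}^2,\rho)$ is exactly the pull-back under the first projection of $G(\mathbb{R},\{1\})$. It is thus bipartite via $(x_1,x_2)\mapsto\lfloor x_1\rfloor\bmod 2$, and it has an edge, e.g. between $(0,0)$ and $(1,0)$; so its chromatic number is $2$.

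The only points requiring any care are the verifications that the two metrics above are plane metrics: being a metric follows formally from items (2)--(3), and compatibility with the usual topology reduces in both cases to the observation that small $\rho$-balls are ordinary Euclidean balls or squares, hence contain and are contained in Euclidean balls of the same center. I do not expect a genuine obstacle here; this proposition is a warm-up for the more elaborate gluing constructions needed for the full statement $\planar=\mathbb{N}\cup\{\aleph_0\}$.
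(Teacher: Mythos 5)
Your constructions are correct and follow essentially the same route as the paper: a bounded translation-invariant metric (you truncate with $\min(\cdot,1/2)$ where the paper uses $\rho/(1+\rho)$, both among the listed operations) to kill all unit distances for the value $1$, and for the value $2$ a supremum combination of the Euclidean metric on the first coordinate with a bounded metric on the second, so that $\rho=1$ forces $|x_1-y_1|=1$ and vertical strips of width $1$ give a $2$-coloring. The verifications you flag (translation invariance and agreement of small balls with the Euclidean topology) indeed go through without difficulty.
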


\begin{proof}
Define a metric on $\mathbb{R}^2$ by
\[\rho_1(x,y) = \frac{|x-y|}{1+|x-y|}\]
where $|\cdot|$ is the Euclidean norm.
It is clearly translation invariant, defines the usual topology and no two points
are at distance $1$ so $\chi(\rho_1,\{1\})=1$.

Define a metric on $\mathbb{R}^2$ by
\[\rho_2(x,y) = \max(|x_1-y_1|,\frac{|x_2-y_2|}{1+|x_2-y_2|})\]
where $(x_1,x_2)=x$ and $(y_1,y_2)=y$.
It is clearly translation invariant, defines the usual topology and
two points $x,y$ are at distance $1$ if and only if $|x_1-y_1|=1$.
It follows first that such pairs of points exist, so that $\chi(\rho_2,\{1\})\geq 2$,
second that a coloring of the plane in alternating blue and red semi-open vertical strips
of Euclidean width $1$ avoids any monochromatic pairs of point at $\rho_2$-distance $1$,
so that $\chi(\rho_2,\{1\})\leq 2$.
\end{proof}

Now, let us look at the other end of the spectrum.
\begin{prop}\label{prop:aleph0}
$\aleph_0\in\planar$ and $\planar\leq \aleph_0$.
\end{prop}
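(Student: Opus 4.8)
The plan is to prove the two assertions separately. For $\planar\leq\aleph_0$ I want to produce, for \emph{every} plane metric $\rho$, a coloring of $G((\mathbb{R}^2,\rho),\{1\})$ with colors in $\mathbb{N}$; for $\aleph_0\in\planar$ I want a single plane metric whose distance-$1$ graph admits no finite coloring. Combined, the first part then forces the chromatic number in the second to be exactly $\aleph_0$.

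For the upper bound, the key observation is that a plane metric $\rho$ induces the usual topology on $\mathbb{R}^2$, which is Lindel\"of, and that each open ball $B_\rho^\circ(x,1/2)$ is open for that topology. First I would cover $\mathbb{R}^2$ by all the balls $B_\rho^\circ(x,1/2)$, $x\in\mathbb{R}^2$, and extract a countable subcover $(B_\rho^\circ(x_n,1/2))_{n\in\mathbb{N}}$. Then I would color a point $y$ by the least index $n$ with $y\in B_\rho^\circ(x_n,1/2)$. By the triangle inequality, two points sharing a color lie in a common ball of $\rho$-radius $1/2$, hence are at $\rho$-distance strictly less than $1$, and in particular not at distance $1$; the coloring is therefore proper, which gives $\chi(\rho,\{1\})\leq\aleph_0$ and thus $\planar\leq\aleph_0$.

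For the lower bound, I would simply truncate the Euclidean metric: set $\rho(x,y)=\min(|x-y|,1)$, which is a metric by the second of the facts listed above, is clearly translation invariant, and agrees with the Euclidean distance on small balls, hence is a plane metric. In this metric any two points at Euclidean distance at least $1$ are at $\rho$-distance exactly $1$; in particular the points $(n,0)$, $n\in\mathbb{Z}$, are pairwise at $\rho$-distance $1$ and form an infinite clique, so $G((\mathbb{R}^2,\rho),\{1\})$ has no finite coloring. With the upper bound this yields $\chi(\rho,\{1\})=\aleph_0$, hence $\aleph_0\in\planar$. I do not anticipate a genuine obstacle; the only points requiring a little care are the two elementary verifications that $B_\rho^\circ(x,1/2)$ is open for the usual topology (immediate from the definition of a plane metric) and that $\min(|\cdot-\cdot|,1)$ is indeed a metric inducing that topology.
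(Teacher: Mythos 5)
Your proof is correct. The lower-bound half coincides with the paper's: you use the same truncated metric $\min(|x-y|,1)$, the only cosmetic difference being that you exhibit an explicit infinite clique $(n,0)_{n\in\mathbb{Z}}$ where the paper places $k$ points on a large Euclidean circle to get cliques of every finite size. The upper bound, however, takes a genuinely different route. The paper tiles the plane by squares whose Euclidean diameter is small enough that (by translation invariance and the equivalence of the topologies) each tile has $\rho$-diameter less than $1$, and colors by tiles; this is completely explicit, needs no choice, and the same tiling device is recycled later with finitely many colors (Theorem \ref{theo:proper_finite}, Proposition \ref{prop:interval}). You instead cover the plane by the balls $B_\rho^\circ(x,1/2)$ --- which are indeed open for the usual topology, by the ``vice-versa'' half of the plane-metric definition applied at each point together with the triangle inequality --- extract a countable subcover by the Lindel\"of property, and color by least index; the radius $1/2$ then guarantees via the triangle inequality that same-colored points are at $\rho$-distance strictly less than $1$. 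This is more general than the paper's argument: it never uses translation invariance and in fact shows $\chi(X,\{1\})\leq\aleph_0$ for any metric space with separable (hence Lindel\"of) topology, at the mild cost of invoking countable choice in the Lindel\"of extraction --- a cost you could remove by simply taking the centers to range over $\mathbb{Q}^2$, since every $B_\rho^\circ(y,1/2)$ contains a Euclidean ball around $y$, hence a rational point $q$, and symmetry of $\rho$ then gives $y\in B_\rho^\circ(q,1/2)$.
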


\begin{proof}
Define a metric on $\mathbb{R}^2$ by
\[\rho_\infty(x,y) = \min(|x-y|, 1).\]

A tiling of the plane by squares of Euclidean diagonal less than $1$ gives a coloring
with countably many colors (map each point to the square it is on) such that any pair
of points at $\rho_\infty$-distance $1$ must have different colors, and we see that
$\chi(\rho_\infty,\{1\})\leq \aleph_0$.

Up to replacing $1$ with another value, this argument in fact uses
only the property that $\rho_\infty$ induces the usual topology and is translation-invariant:
it therefore extends to every planar metric, and $\planar\leq \aleph_0$.

Finally, for all $k\in\mathbb{N}$, one can find $k$ points with pairwise
Euclidean distance at least $1$ (put them regularly on a large enough Euclidean circle).
These points are pairwise at $\rho_\infty$-distance $1$ so that $\chi(\rho_\infty,\{1\})\geq k$
for all finite $k$, and $\chi(\rho_\infty,\{1\})=\aleph_0$.
\end{proof}

The rest of the proof of Theorem \ref{theo:planar} is postponed until the end of Section 
\ref{sec:translation}.

%%%%%%%%%%%
\subsection{A variation on the Johnson-Szlam problem}

The above answers to Johnson and Szlam's questions are in some sense very degenerate, and one wonders
if there is a natural assumption one could further ask $\rho$ to satisfy. There is one indeed:
asking $\rho$ to induce the same topology is linked to its small open balls,
and a somewhat dual condition
is to ask it to be \emph{proper}, that is asking its closed balls to be compact.
Since closed balls of $\rho$ are closed, this amounts to ask them to be bounded in the Euclidean metric
(so that $\rho(x,y)\to\infty$ when $x$ is fixed and $|y|\to\infty$).

\begin{prob}\label{prob:proper}
Determine the set $\planar^*$ of all possible values of $\chi(X,\{1\})$ when $X$ runs over
all proper planar metric spaces.
\end{prob}

The above examples only show that $4\in\planar$, as only the supremum norm is proper
among them. Let us give two pieces of information with regard to this problem; first
we show that Propositions \ref{prop:12planar} and \ref{prop:aleph0}
do not hold anymore for proper planar metrics.

\begin{theo}\label{theo:proper_large}
$\planar^*>2$.
\end{theo}

\begin{proof}
Let $\rho$ be a proper planar metric and $S=S_\rho(0,1)$ be the unit $\rho$-sphere centered
at $0$. By properness, $S$ is not empty (it contains at least the topological
frontier $\partial B$ of $B=B_\rho(0,1)$, which cannot be empty).
This already shows $\planar^*>1$.

Let $y$ be a point such that $|y|>|x|$ for all $x\in B$.
It may happen that $S$ is not connected; but since $S$ disconnects $0$ from 
$y$, there is a connected component $S'$ of $S$ that disconnects
$0$ from $y$. This is not a triviality, and can be found e.g. in \cite{Newman}
Section 14, see also the MathOverflow question and answer \cite{B-T}.

Moreover, by translation invariance and symmetry of $\rho$, $S$ is symmetric
with respect to $0$. It follows that $-S'$ is a connected subset
of $S$ that disconnects $0$ from $-y$; but $y$ and $-y$ are connected outside $B$,
so that both $S'$ and $-S'$ disconnnect $0$ from $y$.

Let $x\in S'$ be a point maximizing $|x|$ and
let $S''=-S'+x$ be
the $x$ translate of $-S'$; then $S''$ contains $0$ and a point $z$ 
such that $|z|>|x|$
(take $z=x+rx$ where $r>0$ is such that $-rx\in S'$).

By the definition of $x$, $z$ is connected to $y$
outside of $S'$, and it follows that $S'$ must disconnect $z$ from $0$.
Since $S''$ is connected, there must be a point $w\in S'\cap S''$.
Now, the points $0,x,w$ form a triangle
of the graph $G(\rho,\{1\})$, which must therefore have chromatic number at least $3$.
\end{proof}

\begin{theo}\label{theo:proper_finite}
$\planar^*\subset\mathbb{N}$.
\end{theo}

\begin{proof}
Let $\rho$ be a proper planar metric, and let us prove that $\chi(\rho,\{1\})$ is finite.

Choose $\varepsilon>0$ small enough that all pairs of points in the square $s$ 
of side length $\varepsilon$ centered at $0$ are at $\rho$-distance less than $1$,
and choose $E$ large enough 
that the closed $\rho$-ball of radius $1$ is contained in the square $S$ entered at $0$
of side length $E-\varepsilon$. Up to enlarge $E$, assume that $E=n\varepsilon$ for some integer
$n$, and tile $\mathbb{R}^2$ with translates of $s$ by elements of $\varepsilon\mathbb{Z}^2$
(in order to get a genuine partition of the plane, we shall assign to each square the points
of its closed left side and of its open lower side).

Now, we label each tile $s+\varepsilon(a,b)$ where $a,b\in\mathbb{Z}$ with the residues
of $(a,b)$ modulo $n$, and color each point of the plane by its tile's label. 
By construction, pairs of points at $\rho$-distance $1$ must be colored with different colors,
so that $\chi(\rho,\{1\})\leq n^2$.
\end{proof}

Next, we show that Problem \ref{prob:proper} can be related to the Euclidean case.

\begin{prop}\label{prop:proper_plane}
For all $d>1$, we have
\[\chi(\mathbb{E}^2,[1,d])\in \planar^*.\]
\end{prop}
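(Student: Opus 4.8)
The plan is to realize $\chi(\mathbb{E}^2,[1,d])$ as $\chi(\rho,\{1\})$ for a suitable proper plane metric $\rho$, by arranging that two points lie at $\rho$-distance exactly $1$ precisely when their Euclidean distance belongs to $[1,d]$. Then $G(\rho,\{1\})$ and $G(\mathbb{E}^2,[1,d])$ are literally the same graph, so they have the same chromatic number, and membership in $\planar^*$ follows.

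Concretely, I would take $\rho(x,y)=f(|x-y|)$, where $|\cdot|$ is the Euclidean norm and $f\colon[0,\infty)\to[0,\infty)$ is the piecewise-linear function equal to $t$ on $[0,1]$, equal to $1$ on $[1,d]$, and equal to $1+\alpha(t-d)$ on $[d,\infty)$ for a small slope $\alpha>0$ to be fixed later; equivalently $f=\min(\mathrm{id},1)+\alpha\max(0,\mathrm{id}-d)$. This $f$ is continuous, non-decreasing, vanishes only at $0$, satisfies $f^{-1}(1)=[1,d]$, and tends to $+\infty$ — the last property being exactly what will force properness.

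I would then check the following, in order. First, that $f$ is subadditive: exactly as for facts (1)--(2) above (a non-decreasing subadditive function vanishing only at $0$, precomposed with a metric, yields a metric), this makes $\rho$ a metric, and it is translation invariant since the Euclidean distance is. Second, that $\rho$ is a plane metric: because $f$ is continuous, non-decreasing and unbounded, the closed $\rho$-ball $B_\rho(x,r)$ equals the closed Euclidean ball about $x$ of radius $g(r):=\max\{t\colon f(t)\le r\}<\infty$, so the $\rho$-balls are just the Euclidean balls up to a reparametrization of the radius; both families contain arbitrarily small balls around every point (indeed $g(r)=r$ for $r<1$), so $\rho$ induces the usual topology. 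Third, that $\rho$ is proper: by the same computation the closed $\rho$-balls are closed Euclidean balls, hence compact. Finally, $\rho(x,y)=1\iff f(|x-y|)=1\iff|x-y|\in[1,d]$, so $G(\rho,\{1\})=G(\mathbb{E}^2,[1,d])$ and $\chi(\mathbb{E}^2,[1,d])=\chi(\rho,\{1\})\in\planar^*$.

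The one step that is not pure bookkeeping is the subadditivity of $f$. Note that $f$ is \emph{not} concave (its slope jumps from $0$ to $\alpha$ at $t=d$), so the concave case cannot simply be invoked; and the tail $\max(0,t-d)$ is not itself subadditive, since at $s=t=d$ it would demand $d\le 0$. The point to exploit is that once $s,t\ge 1$ the summand $\min(\cdot,1)$ contributes a genuine $+1$ to each of $f(s)$ and $f(t)$, and this slack absorbs the defect of the tail exactly when $\alpha d\le 1$. I expect that choosing $\alpha=1/d$ and splitting into cases according to which of the three linear pieces contains each of $s$, $t$, $s+t$ settles it, every case reducing to $\alpha d\le 1$ or $\alpha\le 1$. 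So the crux — and the only place needing care — is to pin down $\alpha$ and run this short case analysis; the remainder is the standard check that a well-chosen transform of the Euclidean metric is a proper plane metric.
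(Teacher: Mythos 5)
Your construction is, after the dust settles, exactly the paper's: with $\alpha=1/d$ your function $f(t)=\min(t,1)+\frac1d\max(0,t-d)$ coincides with $\max\bigl(\min(t,1),\tfrac1d t\bigr)$, so your metric is precisely the paper's $\rho^*_d(x,y)=\max\bigl(\min(|x-y|,1),\tfrac1d|x-y|\bigr)$, and the rest (translation invariance, same topology, closed $\rho$-balls are closed Euclidean balls hence properness, and $\rho(x,y)=1\iff|x-y|\in[1,d]$, so the two graphs coincide) matches the paper's verification. The difference is how the triangle inequality is handled: you write $f$ as a sum and propose a direct case analysis of subadditivity, which you leave as ``I expect $\alpha=1/d$ settles it''; that case analysis does in fact go through for any $\alpha\leq 1/d$, so there is no error, but it is the laborious route. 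The paper sidesteps it by using the closure facts stated at the start of Section \ref{sec:translation}: $\min(|x-y|,1)$ is a metric (truncation), $\tfrac1d|x-y|$ is a metric, and the maximum of two metrics is again a metric --- equivalently, a maximum of nondecreasing subadditive functions vanishing at $0$ is subadditive, since $\max_i f_i(s+t)\leq\max_i\bigl(f_i(s)+f_i(t)\bigr)\leq\max_i f_i(s)+\max_i f_i(t)$. So if you keep your additive presentation you must actually carry out the case-check (your sketch correctly identifies that everything reduces to $\alpha d\leq 1$), whereas rewriting $f$ in max form makes the only ``crux'' step you flagged entirely automatic.
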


\begin{proof}
Define a metric on $\mathbb{R}^2$ by
\[\rho^*_d(x,y) = \max\left(\min(|x-y|,1),\frac1d|x-y|\right).\]
It is clearly translation-invariant and it defines the usual topology.

Its spheres are either Euclidean circles, when the radius is different from $1$,
or the annulus
\[\{y\in\mathbb{R}^2 \,|\, |x-y|\in [1,d]\}=S_\rho(x,1).\]
In particular $\rho_d^*$ is proper and
$G((\mathbb{R}^2,\rho_d^*),1)=G(\mathbb{E}^2,[1,d])$.
\end{proof}

Proposition \ref{prop:proper_plane} 
motivates us to estimate $\chi(\mathbb{E}^2,[1,d])$ in terms of $d$, a question
worth asking even without the non-Euclidean context in mind.

\begin{prob}
Determine the set $\interval$ of values of $\chi(\mathbb{E}^2,[1,d])$ when $d>1$.
\end{prob}

One could want to be even more specific.
\begin{prob}\label{prob:interval}
Determine the function $d\mapsto\chi(\mathbb{E}^2,[1,d])$ defined for $d>1$.
\end{prob}

This problem might be too ambitious, so let us consider further 
sub-problems, first in the small $d$ regime.

%%%%%%%%%%%
\subsection{On the plane with a small interval of distances}

\begin{prob}
Determine $\displaystyle\chi(\mathbb{E}^2,1+):=\lim_{d\to 1,d>1} \chi(\mathbb{E}^2,[1,d])$.
\end{prob}
Note that the limit exists since $\chi(\mathbb{E}^2,[1,d])$ is non-decreasing in $d$.
Moreover the proofs of the known estimates for $\chi(\mathbb{E}^2,\{1\})$
apply readily to this case, so $4\leq \chi(\mathbb{E}^2,1+)\leq 7$; but maybe this problem is
more tractable than to determine the chromatic number of the plane.
Even without determining the values of $\chi(\mathbb{E}^2,\{1\})$ or $\chi(\mathbb{E}^2,1+)$,
one wonders:
\begin{prob}\label{prob:1+}
Do we have $\chi(\mathbb{E}^2,1+)>\chi(\mathbb{E}^2,\{1\})$?
\end{prob}
A positive answer would be a very strong result, as
it would imply $\chi(\mathbb{E}^2,\{1\})<7$; but a negative answer would tell that one
can determine $\chi(\mathbb{E}^2,\{1\})$ by considering graphs with vertices \emph{almost}
$1$ apart, a great flexibility! However answering Problem \ref{prob:1+} may be as difficult as
determining $\chi(\mathbb{E}^2,\{1\})$.

This question has been considered by Exoo \cite{Exoo}, who proves that:
\begin{align*}
\mbox{for } & 1.3114\dots < d < 1.3228\dots,&  \chi(\mathbb{E}^2,[1,d])=7 \\
\mbox{for } & 1.0172\dots < d, &  \chi(\mathbb{E}^2,[1,d])\geq 5
\end{align*}
and conjectures $\chi(\mathbb{E}^2,1+)=7$.

One can already ask the following, less ambitious question.
\begin{prob}
Is $\chi(\mathbb{E}^2,1+)\geq 5$?
\end{prob}

%%%%%%%%%%%
\subsection{On the plane with a large interval of distances}

Let us see what can easily be said in the high $d$ regime.
\begin{prop}\label{prop:interval}
For all $d>1$, we have
\[\packing(1,d+1) \leq \chi(\mathbb{E}^2,[1,d]) \leq \lceil\sqrt{2}d+1\rceil^2\]
where $\packing(1,r)$ is the maximal number of discs of diameter $1$
that can be packed in a disc of diameter $r$.
\end{prop}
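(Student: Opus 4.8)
The plan is to establish the two inequalities by independent, fairly direct arguments. For the upper bound, the idea is the same tiling-by-squares trick used in the proof of Theorem~\ref{theo:proper_finite}: partition $\mathbb{R}^2$ into half-open squares of side length $\ell$ (with the left/lower boundary convention so as to get a genuine partition), and color each square by the residue class of its index in $(\mathbb{Z}/m\mathbb{Z})^2$ for a suitable period $m$. For this coloring to be proper for $G(\mathbb{E}^2,[1,d])$, we need any two points in squares with the same label to avoid having Euclidean distance in $[1,d]$; since two points in the same residue class of squares are either in the same square or at least $m\ell$ apart in, say, each coordinate direction, it suffices that (i) the diameter $\ell\sqrt2$ of a single square is less than $1$, and (ii) two squares receiving the same label are either identical or so far apart that the minimal distance between them exceeds $d$. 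Condition (ii) is guaranteed once $(m-1)\ell > d$. Choosing $\ell$ slightly below $1/\sqrt2$ and $m = \lceil \sqrt2 d + 1\rceil$ makes both conditions hold, and the number of colors is $m^2 = \lceil\sqrt2 d+1\rceil^2$. A short computation checking the boundary cases (points on the seam between two adjacent equal-label squares at distance exactly an integer multiple of $m\ell$) finishes this direction; this is routine and I would not belabor it.

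For the lower bound, the plan is to exhibit a clique in $G(\mathbb{E}^2,[1,d])$ of size $\packing(1,d+1)$. Take a maximal packing of $N=\packing(1,d+1)$ open discs of diameter $1$ inside a disc $\Delta$ of diameter $d+1$, and let $p_1,\dots,p_N$ be their centers. Any two distinct centers satisfy $|p_i - p_j| \ge 1$ because the discs are disjoint and have diameter $1$ (equivalently radius $1/2$, so centers are at least $1$ apart). On the other hand, all the $p_i$ lie in the concentric disc of diameter $d+1-1 = d$... wait — more carefully, each $p_i$ lies within distance $\tfrac12(d+1) - \tfrac12 = \tfrac{d}{2}$ of the center of $\Delta$, hence $|p_i - p_j| \le d$ for all $i,j$. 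Thus every pair of the points $p_1,\dots,p_N$ is at Euclidean distance in $[1,d]$, so they form a clique of size $N$ in $G(\mathbb{E}^2,[1,d])$, giving $\chi(\mathbb{E}^2,[1,d]) \ge \packing(1,d+1)$.

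The main subtlety — really the only place one must be slightly careful — is getting the packing parameters exactly right so that the same packing simultaneously forces pairwise distances $\ge 1$ (from disjointness, needing diameter-$1$ discs) and $\le d$ (from confinement, needing the enclosing disc to have diameter $d+1$ rather than $d$, to leave room for the radius-$1/2$ margin on each side). Once the bookkeeping of radii versus diameters is pinned down, both directions are elementary; I expect no genuine obstacle, only the need to state the square-tiling boundary convention precisely enough that the upper-bound coloring is well defined.
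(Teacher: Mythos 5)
Your proof is essentially the paper's own: the lower bound is obtained exactly as in the paper (the centers of a packing of $\packing(1,d+1)$ diameter-$1$ discs in a disc of diameter $d+1$ form a clique with pairwise distances in $[1,d]$), and the upper bound is the same periodic half-open square tiling as in the proof of Theorem~\ref{theo:proper_finite}, with the same period $\lceil\sqrt2 d+1\rceil$. One small parameter slip: you should take the side length $\ell=1/\sqrt2$ exactly (the paper's choice $\varepsilon=1/\sqrt2$), since a half-open square of side $1/\sqrt2$ never attains its diameter $1$, so within-square distances are still $<1$; with your choice of $\ell$ strictly below $1/\sqrt2$ and $m=\lceil\sqrt2 d+1\rceil$, the requirement $(m-1)\ell\geq d$ fails in the edge case where $\sqrt2 d$ is an integer, and then two same-colored points in squares whose indices differ by exactly $m$ can lie at distance in $[1,d]$ — an easy fix, but as written the stated bound would not be reached for those $d$.
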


\begin{proof}
The lower bound is obvious, since one can find $\packing(1,d+1)$ points in the plane
whose pairwise distance are in $[1,d]$ (take the centers of a packing).

The upper bound is obtained as in Theorem \ref{theo:proper_finite}
with $\varepsilon=1/\sqrt{2}$ and $n=\lceil\sqrt{2}d+1\rceil$.
\end{proof}
It seems inefficient to use a $\mathbb{Z}/n$-indexed square tiling for the upper bound, and
indeed Exoo \cite{Exoo} gets a better bound using an hexagonal grid.
It is well-known that 
\[\packing(1,d+1)\sim \frac{\pi}{\sqrt{12}} d^2\]
when $d\to\infty$, and we are led to the following problem.
\begin{prob}
Find the values of 
\[\limsup_{d\to\infty} \frac{\chi(\mathbb{E}^2,[1,d])}{d^2} \quad\mbox{and}\quad
\liminf_{d\to\infty} \frac{\chi(\mathbb{E}^2,[1,d])}{d^2}.\]
\end{prob}

Proposition \ref{prop:interval} shows
\[\liminf_{d\to\infty} \frac{\chi(\mathbb{E}^2,[1,d])}{d^2} \geq \frac{\pi}{\sqrt{12}}\simeq 0.9069\]
and Exoo's Theorem 2 implies
\[\limsup_{d\to\infty} \frac{\chi(\mathbb{E}^2,[1,d])}{d^2} \leq \frac{4}{3}.\]
While I am not confident enough to make a precise conjecture, my personnal guess is that
one could extend Sz\'ekely's proof of $\chi(\mathbb{E}^2,\{1\})\leq 7$ \cite{Szekely}
to improve the upper bound.

In case a precise determination could not be reached, one could ask the following.
\begin{prob}
Does $\displaystyle\frac{\chi(\mathbb{E}^2,[1,d])}{d^2}$ have a limit when $d$ goes to infinity?
\end{prob}

%%%%%%%%%%
\subsection{Possible chromatic numbers of proper planar metrics}

Back to the question of determining the set $\planar^*$, all what precedes
gives some information: there must be elements of $\planar^*$
in any integer interval of the form $\llbracket n, \alpha n \rrbracket$
for $\alpha>\frac{8}{\pi\sqrt{3}}$ and $n$ large enough. But this is weak as it
only bounds below the cardinal of $\planar^*\cap\llbracket 1,N\rrbracket$
by a multiple of $\log N$; in fact most integers are in $\planar^*$.

\begin{theo}\label{theo:proper}
$\planar^*$ contains all \emph{non-prime} integers
greater than $1$.
\end{theo}

We shall construct ad-hoc metrics using the supremum product metric.

\begin{lemm}
Let $(X_i,\rho_i)$ ($i=1,2$) be two metric spaces. Then
we have
\[ \max_i(\chi(\rho_i,\{1\})) \leq \chi(\rho_1 \stackrel{\infty}{\times} \rho_2,\{1\})
   \leq \chi(\rho_1,\{1\}) \cdot \chi(\rho_2,\{1\})\]
and
\[\clique(G(\rho_1,\{1\}))\times\clique(G(\rho_2,\{1\})) 
  \leq \clique(G(\rho_1 \stackrel{\infty}{\times} \rho_2,\{1\}))\]
where $\clique(\rho, D)$ is the size of a largest clique
in $G((X,\rho),D)$.
\end{lemm}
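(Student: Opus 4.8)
The plan is to establish the three inequalities separately, each by an explicit construction read off from the definition of $\rho_1 \stackrel{\infty}{\times} \rho_2$. For the rightmost inequality $\chi(\rho_1 \stackrel{\infty}{\times} \rho_2,\{1\})\leq\chi(\rho_1,\{1\})\cdot\chi(\rho_2,\{1\})$, I would pick colorings $c_i$ of $G(\rho_i,\{1\})$ whose value sets have cardinalities $\chi(\rho_i,\{1\})$, and form the product coloring $c(x_1,x_2)=(c_1(x_1),c_2(x_2))$ of $X_1\times X_2$. If $(x_1,x_2)$ and $(y_1,y_2)$ are adjacent in the product graph, then $\max(\rho_1(x_1,y_1),\rho_2(x_2,y_2))=1$, hence at least one of the two coordinate distances equals $1$; the corresponding $c_i$ then separates that coordinate, so $c$ separates the two vertices. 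The number of colors used is at most the product of the two, with the usual convention that the product is infinite as soon as one factor is.

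For the leftmost inequality $\max_i\chi(\rho_i,\{1\})\leq\chi(\rho_1 \stackrel{\infty}{\times} \rho_2,\{1\})$, I would realize each factor graph as a subgraph of the product graph by a \emph{slicing} argument: fix a base point $a\in X_2$ and consider the injection $x_1\mapsto(x_1,a)$. Since $\rho_2(a,a)=0\leq 1$, a pair at $\rho_1$-distance $1$ is sent to a pair at product-distance $\max(1,0)=1$, so this embeds $G(\rho_1,\{1\})$ into $G(\rho_1 \stackrel{\infty}{\times} \rho_2,\{1\})$, and symmetrically for $G(\rho_2,\{1\})$; monotonicity of the chromatic number under passing to subgraphs then gives the bound. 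For the clique inequality, I would take cliques $A\subset X_1$ and $B\subset X_2$ in $G(\rho_1,\{1\})$ and $G(\rho_2,\{1\})$ of sizes $\clique(G(\rho_1,\{1\}))$ and $\clique(G(\rho_2,\{1\}))$ respectively, and check that $A\times B$ is a clique in the product graph: two distinct points of $A\times B$ differ in at least one coordinate, where the distance is $1$, while in the other coordinate it is $0$ or $1$, so the maximum is $1$. This yields a clique of size $|A|\cdot|B|$.

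None of these three steps is a real obstacle; they are all direct checks against the definitions. The single point to keep in mind throughout is the arithmetic of the supremum: a product-distance equal to $1$ forces exactly one of the two coordinate distances to contribute the value $1$, the other being merely $\leq 1$ — and in the degenerate case of equal points that other coordinate distance is $0<1$, which is precisely what makes the slicing embedding land on genuine edges. The only mild care needed is bookkeeping with the convention for infinite chromatic numbers when stating the upper bound.
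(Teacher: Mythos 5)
Your proposal is correct and follows essentially the same route as the paper: the product coloring for the upper bound, embedding the factors (via slices/isometric copies) for the lower chromatic bound, and the product of cliques (the strong product) for the clique bound. The only nitpick is the closing remark that a product-distance of $1$ forces \emph{exactly} one coordinate distance to equal $1$ --- it forces \emph{at least} one, which is what your argument actually uses.
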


\begin{proof}
Write $X=X_1\times X_2$ and $\rho=\rho_1 \stackrel{\infty}{\times} \rho_2$.
The lower bound on $\chi(\rho,\{1\})$ is obvious, as $X$ contains isometric copies
of $X_1$ and $X_2$. The upper bound is obtained by taking colorings $c_i$ of the $X_i$
and defining $c(x_1,x_2)=(c_1(x_1),c_2(x_2))$. If $x=(x_1,x_2)$ and $y=(y_1,y_2)$
are at $\rho$-distance $1$, then $\rho_i(x_i,y_i)=1$ for some $i$. Then $c_i(x_i)\neq c_i(y_i)$
and therefore $c(x)\neq c(y)$.

The lower bound on $\clique$ is also obvious, as the strong product of cliques of
$G(X_1,\{1\})$ and $G(X_2,\{1\})$ is a clique in $G(X,\{1\})$.
\end{proof}

\begin{lemm}
We have 
\[\lfloor d\rfloor +1 \leq \clique(G(\mathbb{E}^1,(1,d])) \quad\mbox{and}\quad
  \chi(\mathbb{E}^1,[1,d]) \leq \lceil d \rceil +1.\]
\end{lemm}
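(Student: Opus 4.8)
The plan is to establish the two inequalities separately, each by a short explicit construction: one exhibiting a clique, one exhibiting a coloring.

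For the clique bound, I would place $\lfloor d\rfloor+1$ points of $\mathbb{R}$ in arithmetic progression, $0,a,2a,\dots,\lfloor d\rfloor a$, choosing the common difference $a$ in $(1,d/\lfloor d\rfloor]$ (a nonempty interval as soon as $d$ is not an integer; for instance $a=d/\lfloor d\rfloor$). The pairwise distances of these points are exactly $a,2a,\dots,\lfloor d\rfloor a$, all of which lie in $(1,d]$, so the points span a clique of $G(\mathbb{E}^1,(1,d])$ of the required size.

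For the chromatic bound I would use a single periodic coloring of the line built from the floor function. Put $N=\lceil d\rceil+1$ and color each $x\in\mathbb{R}$ by $\lfloor x\rfloor \bmod N$. If two points $x,y$ get the same color, then either $\lfloor x\rfloor=\lfloor y\rfloor$, so that $|x-y|<1$, or $|\lfloor x\rfloor-\lfloor y\rfloor|\geq N$, so that $|x-y|>N-1=\lceil d\rceil\geq d$; in both cases $|x-y|\notin[1,d]$. Hence this is a proper $N$-coloring of $G(\mathbb{E}^1,[1,d])$ and $\chi(\mathbb{E}^1,[1,d])\leq \lceil d\rceil+1$.

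Neither step presents a real difficulty; the only thing to watch is the floor/ceiling bookkeeping at the endpoints. In the coloring argument, the set of distances that can occur between two equally colored points is exactly $[0,1)\cup(\lceil d\rceil,\infty)$, and it is precisely the requirement that this set be disjoint from $[1,d]$ that pins down the value $N=\lceil d\rceil+1$ and that makes the half-open conventions (for instance $(1,d]$ versus $[1,d]$) relevant. I therefore expect the verification of these distance estimates to be the main---though routine---point of the proof.
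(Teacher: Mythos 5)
Your coloring half is correct and is essentially the paper's own argument: coloring $\mathbb{R}$ periodically by half-open unit intervals with period $N=\lceil d\rceil+1$ is exactly the ``sequences of $n$ half-open intervals of length $1$ with $n-1\geq d$'' used in the paper, and your distance check is sound. (A minor quibble: the set of distances between equally colored points is $[0,1)\cup\bigcup_{k\geq1}(kN-1,kN+1)$, not all of $[0,1)\cup(\lceil d\rceil,\infty)$; but only the inclusion matters, and you have it.)

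The clique half has a genuine gap, and it sits exactly where you hedged. Your arithmetic progression needs a common difference $a$ with $1<a\leq d/\lfloor d\rfloor$, and when $d$ is an integer this interval is empty, so the construction produces nothing. Worse, for integer $d$ the statement with the half-open distance set $(1,d]$ is actually false: $\lfloor d\rfloor+1=d+1$ points whose pairwise distances all exceed $1$ have consecutive gaps $>1$, hence span more than $d$, contradicting that the extreme pair must be at distance at most $d$. What the paper proves --- and what it actually needs later, in the proof of Theorem \ref{theo:proper}, where the $d_i$ are integers --- is the closed-interval version: take the vertices $0,1,\dots,\lfloor d\rfloor$, whose pairwise distances $1,2,\dots,\lfloor d\rfloor$ all lie in $[1,d]$; in other words the ``$(1,d]$'' in the lemma should be read as $[1,d]$, consistently with the chromatic-number half and with the application. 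So either restrict your construction to non-integer $d$ (where it does prove the literal statement, but not in the case the paper uses), or allow $a=1$ and work with $[1,d]$, which is the paper's one-line proof.
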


\begin{proof}
The lower bound on the clique number is obtained by taking the $k$ vertices
$0,1,\dots,k-1$ where $k-1\leq d$ is an integer. The upper bound on
the chromatic number is given by coloring $\mathbb{R}$ in sequences
of $n$ half-open intervals of length $1$, where $n-1\geq d$.
\end{proof}

We leave as an exercise to the reader to determine the precise value of $\chi(\mathbb{E}^1,[1,d])$.

\begin{proof}[Proof of Theorem \ref{theo:proper}]
Let $n>1$ be an non-prime integer and
consider on $\mathbb{R}$ the metrics defined by
\[
\rho_i(x,y) = \max\left(\min(|x-y|,1),\frac1{d_i}|x-y|\right)
\]
where $i=1,2$ and the $d_i$ are positive integers such that $(d_1+1)(d_2+1)=n$.
Let $\rho=\rho_1 \stackrel{\infty}{\times} \rho_2$;
It is a proper planar metric and by the above lemmas we have
\begin{eqnarray*}
\chi(\rho,\{1\}) &\leq& \chi(\rho_1,\{1\})\cdot\chi(\rho_2,\{1\}) \\
  &=& \chi(\mathbb{E}^1,[1,d_1])\cdot\chi(\mathbb{E}^1,[1,d_2]) \\
  &=& (d_1+1)(d_2+1)=n
\end{eqnarray*}
and
\begin{eqnarray*}
\chi(\rho,\{1\}) &\geq& \clique(\rho,\{1\}) \\
  &\geq& \clique(\rho_1,\{1\}) \cdot\clique(\rho_2,\{1\}) \\
  &=& (d_1+1)(d_2+1) = n
\end{eqnarray*}
\end{proof}

In view of Theorems \ref{theo:proper_large} and \ref{theo:proper},
the following is an interesting and probably reasonable sub-question of Problem \ref{prob:proper}.
\begin{prob}
Does it hold $\planar^*>3$?
\end{prob}
One could try to adapt the proof of \cite{Chilakamarri} that $\chi(\rho,\{1\})>3$ when
$\rho$ comes from a norm, but as can be seen in the proof of Theorem
\ref{theo:proper_large} the general case can be more tedious.

Last, let us prove our first stated result.
\begin{proof}[Proof of Theorem \ref{theo:planar}]
For any integer $d>0$, consider the metric defined on $\mathbb{R}^2$ by
\[\rho(x,y) = \max\left(\min(|x_1-y_1|,1),\frac1d|x_1-y_1|,\frac{|x_2-y_2|}{1+|x_2-y_2|}\right).\]
It is a (non-proper) planar metric and $\chi(\rho,\{1\})=\chi(\mathbb{E}^1,[1,d])=d+1$.
Propositions \ref{prop:12planar} and \ref{prop:aleph0} conclude the proofs.
\end{proof}

%%%%%%%%%%%%%%%%
\subsection{Another variation}

In the determination of $\chi(\rho,D)$ where $\rho$ is a proper planar metric and $D$ is
compact, all that matters is of course the union of spheres $\cup_{d\in D} S_\rho(0,d)$,
a compact that does not contain $0$. The metric axioms impose further restriction
on this compact, which we can wave to obtain the following definition.
\begin{defi}[graph and chromatic number of a compact set]
Given a compact set $K$ of the plane $\mathbb{R}^2$ such that $0\notin K$ and $-K=K$, let $G(K)$
be the graph whose vertices are the points of $\mathbb{R}^2$ and where $x\sim y$
whenever $y-x\in K$. This will be called the \emph{graph of $K$}, and its chromatic number
will be denoted by $\chi(K)$ 
\end{defi}
Note that to ensures that $\sim$ is a symmetric relation, we need to ask that $K$ is symmetric;
this together with translation invariance and properness are the sole axioms we kept from
proper planar metrics.

All previous problems are particular cases of the problem of determining $\chi(K)$; for example
$\chi(\mathbb{E}^2,\{1\})=\chi(S_{\mathrm{euc}}(0,1))$. It would therefore be a bit rough
to ask the value of $\chi(K)$ for all $K$.

Let us denote by $\compact$
the set of possible values of $\chi(K)$ when $K$ runs over symmetric compact sets of
the plane avoiding $0$.

Let us now denote by $\compact(2k)$ the possible values of $\chi(K)$ when $K$
is assumed to have $2k$ elements.
Then using the De Bruijn-Erdös theorem (and thus
the axiom of choice), one easily proves that $\compact(2k)\leq 2k+1$, but this bound
is probably not tight. 

\begin{prob}
Determine $\compact(2k)$ in ZFC.
\end{prob}
If one wants to avoid the axiom of choice, one should determine the possible
values of $\sup \chi(G')$ where the sup is over the finite subgraphs $G'$ of $G(K)$
and $K$ is assumed to have $2k$ elements.

\begin{prob}
Determine $\compact(2k)$ in any given
axiom system consistent with ZF?
\end{prob}

\begin{prob}
Is it true that $\mathbb{N}\subset\compact$?
\end{prob}

\begin{prob}
What are the tightest comparisons between $\chi(K_1)$, $\chi(K_2)$ and $\chi(K_1\cup K_2)$,
$\chi(K_1\cap K_2)$ ?
\end{prob}

%%%%%%%%%%%%%%%%%%%%%%%%%%%%%%%%%%%%%%%%%%%%%%%%%%%%%%%%%%%%%%%%%%%%
\section{The hyperbolic plane}\label{sec:hyperbolic}

Let us now consider the case $X=\mathbb{H}^2$, the hyperbolic plane.
We shall from now on use $\rho$ only to denote its metric.

One can simply ask:
\begin{prob}\label{prob:hyperbolic_some}
Determine $\chi(\mathbb{H}^2;\{d\})$ for some $d>0$.
\end{prob}
\begin{prob}
Determine $\chi(\mathbb{H}^2;\{d\})$ for all $d$. 
\end{prob}
Of course even the first of these problem seems difficult.

Note that it is the same to fix the curvature of $X$ to $-1$, as we do,
and let $d$ vary or to fix $d=1$ and consider hyperbolic planes of
various (constant) curvature. As a consequence, the large $d$ regime will
be called the ``high curvature case'' (high meaning highly negative)
and the small $d$ regime the ``low curvature case''.

%%%%%%%%%%%
\subsection{Lower bound}

A lower bound is obtained just like in the Euclidean case:
For all $d>0$, we have $\chi(\mathbb{H}^2;\{d\})\geq 4$.

Let indeed $x$ be a point in 
$\mathbb{H}^2$ and $y,z$ be such that $(x,y,z)$ form an equilateral triangle
of side $d$. Let $x'$ be the other point such that $(x',y,z)$ is an equilateral
triangle. If $G(\mathbb{H}^2;\{d\})$ where $3$-colorable, $x'$ would have to 
be colored in the same color as $x$, say red. Then the whole circle
of center $x$ and radius $\rho(x,x')$ should be colored red.

Here, something is to be checked: for all $d$, $\rho(x,x')>d$. This does hold,
but barely when $d\to\infty$; we omit the (simple) proof. Now, this implies
that two points on this red circle are $d$ apart, a contradiction.

This leaves open the following natural question, raised by Kahle \cite{Kahle}
\begin{prob}[Kahle]\label{prob:hyperbolic5}
Do we have $\chi(\mathbb{H}^2;\{d\})\geq 5$ for at least one explicit $d$?
\end{prob}

%%%%%%%%%%%%
\subsection{Hyperbolic checkers}

For upper bounds, we shall adapt the method of Sz\'ekely: another
approach would be to use lattices of large systole, but they are
inefficient because their fundamental domains are necessarily large, and one would
have to use exponentially many colors for large $d$.

We use the half-plane model $\mathbb{H}^2=\{(x,y)\in\mathbb{R}^2 \,|\, y>0\}$
with Riemannian metric $y^{-2}(dx^2+dy^2)$ to design a checkerboard tiling of
the hyperbolic plane.

Let $h,\ell$ be positive reals. First, tile the upper half-plane by
horizontal strips $S_n$ defined by $y\in[e^{nh},e^{(n+1)h})$ for $n\in\mathbb{Z}$.
The boundaries of these strips are not geodesics, but they are horocircles
(limit of geodesic circles) and lie precisely at distance $h$ one to the next.

Next, tile each strip $S_n$ by Euclidean rectangle $R_{n,k}$
defined by $x\in[kre^{nh},(k+1)re^{nh})$
where $k\in\mathbb{Z}$ and $r$ is chosen so that 
\[\rho((x,1),(x+r,1))=\ell.\]
Each $R_{n,k}$ has geodesic vertical sides of hyperbolic length $h$,
its bottom vertices are at hyperbolic distance $\ell$ one from the other,
and its top vertices are at distance $e^{-h}\ell$ one from the other.
Its diameter is easily seen to be bounded above by $\max(\ell,h+e^{-h}\ell)$
and is not realized.

There is some choice left in the definition of this tiling, but we shall
not use this extra liberty here. A tiling $\mathcal{R}$ constructed as above
will be said to be a $(h,\ell)$-checkerboard.

%%%%%%%%%%%
\subsection{Upper bound in high curvature}

First we give a linear bound on the chromatic number of the hyperbolic plane
for large $d$.
\begin{theo}\label{theo:high-curvature}
For all $d\geq 3\ln 3\simeq 3.296$ we have
\[\chi(\mathbb{H}^2,\{d\})\leq 4\left\lceil\frac d{\ln 3}\right\rceil + 4.\]
\end{theo}

\begin{proof}
Let $\mathcal{R}=(R_{n,k})$ be a $(\ln 3,d)$-checkerboard, and color the
tile $R_{n,k}$ by $(n \mod N, k \mod 4)$ where
\[N= \left\lceil\frac d{\ln 3}\right\rceil +1.\]

The diameter of a tile is at most $\max(d,\ln3+d/3)=d$,
and two points of the same color in different tiles
are either in the same slice but in tiles of horizontal indices
$k,k'$ at least $4$ apart, therefore at distance $>(4-1)e^{-\ln3}d=d$,
or in different slices $S_n,S_{n'}$ with $n'\geq n+4$, therefore
at distance $>(N-1)\ln 3\geq d$.
\end{proof}

One wonders what is the true asymptotic behavior of the chromatic number
for large $d$, in particular:
\begin{prob}
Does $\chi(\mathbb{H}^2,\{d\})$ increase linearly in $d$? That is:
do we have
\begin{align*}
\limsup_{d\to\infty} \frac{\chi(\mathbb{H}^2,\{d\})}{d} >0? \\
\liminf_{d\to\infty} \frac{\chi(\mathbb{H}^2,\{d\})}{d} >0?
\end{align*}
\end{prob}

Quantitatively, we ask:
\begin{prob}
Improve the constant $4/\ln3$ in Theorem \ref{theo:high-curvature} or
prove it is best possible.
\end{prob}

In the opposite direction, Matthew Kahle \cite{Kahle} suggested that the chromatic
number could be bounded.
\begin{prob}[Kahle]
Is $\chi(\mathbb{H}^2,\{d\})$ bounded independently of $d$?
\end{prob}

If this turns out not to be the case, then an even more Ramsey-like question
would make much sense.
\begin{prob}\label{prob:Ramsey}
Which finite graphs $H$ appear as (not necessarily induced) monochromatic subgraphs of
all $r$-colorings of $G(\mathbb{H}^2,\{d\})$ for all $d>D(H,r)$?
Give estimates on the function $D$.
\end{prob}
That $\chi(\mathbb{H}^2,\{d\})\to\infty$ when $d\to\infty$
would exactly mean that the graph with two vertices and an edge would
be an example of valid $H$ in Problem \ref{prob:Ramsey}.

%%%%%%%%%%%
\subsection{Upper bound in low curvature}

Let us now give a uniform upper bound for small values of $d$.
\begin{theo}\label{theo:low-curvature}
For all $d\leq 2\ln(3/2) \simeq 0.81$, we have 
\[\chi(\mathbb{H}^2,\{d\})\leq 12.\]
\end{theo}

\begin{proof}
Let $\mathcal{R}=(R_{n,k})$ be a $(d/2,d/2)$-checkerboard, and color the
tile $R_{n,k}$ by $(n \mod 3, k \mod 4)$.

The diameter of a tile is at most $d$,
and two points of the same color in different tiles
are either in the same slice but in tiles of horizontal indices
$k,k'$ at least $4$ apart, therefore at distance $>(4-1)e^{-d/2}d/2\geq d$,
or in different slices $S_n,S_{n'}$ with $n'\geq n+3$, therefore
at distance $>(3-1)d/2= d$.
\end{proof}

Note that by using checkerboard tilings, one can prove that $\chi(\mathbb{H}^2,\{d\})$
is finite for all $d$. One would think that for very small $d$, a $(h,h)$-checkerboard tiling
with $h$ of the order of $d/\sqrt{2}$ could be made to look very much like the Sz\'ekely
tiling. There is a difficulty here, as one cannot arrange the top and bottom
sides to have the same size, and successive slices can therefore not be ``synchronized''.

\begin{prob}
Improve the value $12$ in Theorem \ref{theo:low-curvature} for small enough $d$.
\end{prob}
It is likely that such an improvement is possible, but down to what number can we push this method?
Since at low curvature
the hyperbolic plane looks more and more flat, one would like to relate
the hyperbolic case to the Euclidean one.
\begin{prob}
Does it exist $d_0>0$ such that for all $d\in(0,d_0)$, $\chi(\mathbb{H}^2,\{d\})\leq 7$?
Determine as great a value of $d_0$ as you can.
\end{prob}

\begin{prob}\label{prob:hyp_euc}
Do we have $\chi(\mathbb{H}^2,\{d\})\to \chi(\mathbb{E}^2,\{1\})$ when $d\to 0$?
If we do, what is the greatest $d_1$ such that $\chi(\mathbb{H}^2,\{d\})=\chi(\mathbb{E}^2,\{1\})$
for all $d< d_1$ ?
\end{prob}

Let us stress another question, once again probably ambitious.
\begin{prob}[Kahle]
Is $\chi(\mathbb{H}^2,\{d\})$ non-decreasing in $d$?
\end{prob}

At least, one can wonder whether a partial combination of the last two problems holds.
\begin{prob}
Do we have $\chi(\mathbb{H}^2,\{d\})\geq \chi(\mathbb{E}^2,\{1\})$ for all $d$?
\end{prob}

%%%%%%%%
\subsection{For the road: what about an interval of distances?}

If one wants to estimate for example $\chi(\mathbb{H}^2,[d,d(1+\varepsilon)])$ for a
positive $\varepsilon$, an important phenomenon to take into account
is the \emph{concentration}: in a hyperbolic circle of radius $r\gg 1$,
most pairs of points are at distance roughly $2r$. 
It follows that $G(\mathbb{H}^2,[d,d(1+\varepsilon)])$ contains large cliques
when $d$ is large.
One can more precisely prove that
\[e^{\varepsilon d} \lesssim \clique(\mathbb{H}^2,[d,d(1+\varepsilon)]) 
\leq \chi(\mathbb{H}^2,[d,d(1+\varepsilon)])\]
Given we arrive at the end of our numerotation, we let the last problem
quite vague.
\begin{prob}
Study the behaviour of 
\[\chi(\mathbb{H}^2,[d,d+f(d)]) \quad\mbox{and}\quad
\clique(\mathbb{H}^2,[d,d+f(d)])\]
for various functions $f$.

In particular, can one find a $f$ (to be searched in the $\ln d$ regime)
such that these numbers stay very close one to the other as $d\to\infty$?
\end{prob}

\bibliographystyle{smfalpha}
\bibliography{biblio}

%%%%%%%%%%%%%%%%%%%%%%%%%%%%%%%%%%%%%%%%%%%%%%%%%%%%%%%%%%%%%%%
%%%%%%%%%%%%%%%%%%%%%%%%%%%%%%%%%%%%%%%%%%%%%%%%%%%%%%%%%%%%%%%
\end{document}